\newenvironment{proof}{\noindent{\bf Proof.}}
{\noindent \ \hfill$\Box$\par}
\newcommand{\bzd}{\hfill\boxed{}}
\newtheorem{theorem}{Theorem}[section]
\newtheorem{pro}[theorem]{Proposition}
\newtheorem{lem}[theorem]{Lemma}
\newtheorem{cor}[theorem]{Corollary}
\newtheorem{rem}[theorem]{Remark}
\newcommand{\hc}{\circ}
\newcommand{\n}{\{ }
\newcommand{\nn}{\} }
\newcommand{\ca}{\eta}
\newcommand{\cg}{\sigma}
\newcommand{\lt}{\varepsilon}
\newcommand{\ch}{ \bar{\nu}  }
\newcommand{\cn}{\kappa}
\newcommand{\ccs}{\bar{\sigma}}
\newcommand{\ct}{\bar{\kappa}}
\newcommand{\kj}{\phi}
\newcommand{\cx}{\delta}
\newcommand{\w}{\omega}
\newcommand{\wq}{\infty}
\newcommand{\af}{\alpha}
\newcommand{\ord}{\mathrm{ord}}
\newcommand{\m}{\;\mathrm{mod}\,}
\newcommand{\e}{\iota}
\newcommand{\dyd}{\supseteq}
\newcommand{\ty}{ \equiv}
\newcommand{\tg}{\approx}
\newcommand{\jia}{\oplus}
\newcommand{\z}{\mathbb{Z}}
\newcommand{\s}{\Sigma}
\newcolumntype{Y}{>{\centering\arraybackslash}X}
\newcommand{\q}{ \Delta}
\begin{document}
\date{}
\title{\textsc{On the extension problems for  three  33-stem homotopy groups}}
\author{Juxin Yang\thanks{\text{Beijing Institute of Mathematical Sciences and Applications, Beijing, 101408, P.R. China.  yangjuxin@bimsa.com}}  \,\;and Jie Wu\,\thanks{Beijing Institute of Mathematical Sciences and Applications,
Beijing, 101408, P.R. China.
wujie@bimsa.com}}
\maketitle
\begin{abstract}
This paper tackles the extension problems for three far-unsatble homotopy groups $\pi_{39}(S^{6})$, $\pi_{40}(S^{7})$, and $\pi_{41}(S^{8})$ localized at 2, the puzzles    having remained unsolved for forty-five years. By a Toda bracket indexed by 1  included in $\pi_{39}(S^{6}_{(2)})$, which makes better use of the deuspension property of homotopy classes, we address the problems. As a corollary, through  Thomeier's 8-step backward theorem of the metastable homotopy theory, together with the results of Oda, Mukai and Miyauchi, we show a table of the 33-stem homotopy groups $\pi_{33+n}(S^{n}_{(2)})$, ($2\leq n\leq 9$, $n\geq27$). \\\indent\textbf{Key Words and Phrases:} unstable homotopy group,  Toda bracket, \textit{EHP} sequence
\end{abstract}

\section{Introduction}

The  extension problems from the 1979 literature by N. Oda (\cite[pp.\,145]{Oda}), which have gone uncracked for the past 45 years, are as follows:\vspace{-0.4\baselineskip} \[ 
\pi_{39}(S^{6})\tg(\z/2)^{8} \text{\;or\;}(\z/2)^{6}\jia\z/4,\]\[\pi_{40}(S^{7})\tg(\z/2)^{6}  \text{\;or\;}(\z/2)^{4}\jia\z/4\;\] \[\text{\;and\;}\; \pi_{41}(S^{8})\tg\z/8\jia(\z/2)^{8}  \text{\;or\;} \z/8\jia\z/4\jia(\z/2)^{6}\;\; \text{\;localized at 2.}\]
 The key aspect of addressing these problems lies in determining the order of the element $\cn'_{6}\in\pi_{39}(S^{6}_{(2)})$ with nontrivial \textit{Hopf} invariant  $\ch_{11}\ct_{19}$. In this article, utilizing a Toda bracket indexed by 1  included in $\pi_{39}(S^{6}_{(2)})$, namely, $\n\s\e_{5}, \s(\nu_{5}\ca_{8}), \s(\ch_{9}\ct_{17}) \nn_{1}$, we solve  the extension problems. During our discussion with Professor Juno Mukai, he highlights that a Toda bracket using   \textit{id} to define is often regarded as a highly unsuitable tool, as it impedes the tracking of elements via the iterated suspension functor $\s^{k}$. However, we utilized it which is logically sound. Furthermore, it is instrumental in overcoming Oda's extension problems. In some sense, it is  because in practice  the Toda bracket of the form $\n\; id, \s^{n}-,\s^{n}-\nn_{n}\,(n\geq1)$  has been frequently disregarded  that these problems have remained unresolved for such an extended period. It is noteworthy that the  Toda bracket indexed by $n$ with $n\geq1$ makes better use of the desuspension properties of homotopy classes and plays a critical role in the theory of computing unstable homotopy groups.

Homotopy groups occupy a central and foundational position in  homotopy theory, encapsulating the essence of building  spaces. These groups serve as pivotal invariants, providing profound insights into the inherent geometric and algebraic characteristics of spaces. Among the myriad of homotopy groups, those pertaining to spheres hold a particularly preeminent and influential status.  Through meticulous analysis of these groups, researchers have devised sophisticated techniques and unearthed unexpected results, significantly impacting areas such as algebraic topology, differential geometry, and theoretical physics.

Given $k\in\z_{+}$, the groups $ \pi_{n+k}(S^{n})\,(n\geq2)$ are called the \textit{k}-stem homotopy groups of spheres.
The initial systematic and effective computation of homotopy groups of spheres is presented in \cite{Toda} (the \textit{1} to \textit{19}-stems) by H. Toda in 1962, followed subsequently by \cite{20STEM} (the \textit{20}-stem), \cite{2122STEM} (the \textit{21,22}-stems localized at 2)  and \cite{2324STEM} (the \textit{23,24}-stems localized at 2).

In 1979, N. Oda (\cite{Oda}) studies the homomotopy groups of spheres $\pi_{n+k}(S^{k})\,\;(25\leq n\leq 31)$ for all $k\geq2$ and 
$\pi_{n+k}(S^{k})\,(n=32, 33)$ for  $2\leq k\leq 8$ localized at 2.  This work epitomizes an exceptional blend of rigorous logic and artistic flair. Oda employs an array of sophisticated techniques to compute these homotopy groups, manifesting an    extraordinary level of mathematical prowess. The employed methodologies encompass the classical Toda bracket method, the \textit{4}-fold Toda bracket method, the Adams spectral sequence and Im$(J)$-theory. However,  the \textit{33}-stem homotopy groups  $\pi_{39}(S^{6})$, $\pi_{40}(S^{7})$ and $\pi_{41}(S^{8})$ localized at 2 are incompletely determined. 

In 2017,  the determinations of the 
\textit{32}-stem homotopy groups localized at 2, namely, the groups $\pi_{32+k}(S^{k}_{(2)})$ for all $k\geq 2$, are comprehensively concluded by T. Miyauchi and J. Mukai (\cite{32STEM}). The authors provide  a new tool for  determinations of unstable homotopy groups, that is, the left matrix  Toda bracket indexed  by $n$, which makes better use of the desuspension property of homotopy classes if $n\geq1$ and is a key ingredient to compute $\pi_{46}(S^{14}_{(2)})$.   Notably, \cite{32STEM} gives a comprehensive computation of the homotopy group $\pi_{42}(S^{9}_{(2)})$,  a \textit{33}-stem  homotopy group localized at 2. While the three groups $\pi_{33+n}(S^{n}_{(2)})\,(n=6,7,8)$  are still unresolved within \cite{32STEM}.

Herein lies our  main theorem, which  addresses the extension problems posed by Oda (\cite[pp.\,145]{Oda}), (see Proposition \ref{zdl2}).

\begin{theorem}\label{zdly} Localized at 2,  \\\\ \centerline{
    $\pi_{39}(S^{6})\tg\z/4\jia(\z/2)^{6}$, \;$\ord(\cn'_{6})=4;$ 
      $\pi_{40}(S^{7})\tg\z/4\jia(\z/2)^{4};$
      $\pi_{41}(S^{8})\tg\z/8\jia\z/4\jia(\z/2)^{6}$.} 
\end{theorem}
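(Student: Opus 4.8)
The plan is to collapse all three extension problems to the single arithmetic question of whether $2\cn'_6=0$ in $\pi_{39}(S^6_{(2)})$. Oda's computation already pins each group down to one ambiguity governed precisely by $\ord(\cn'_6)$: order $2$ yields $\pi_{39}(S^6)\tg(\z/2)^8$, whereas order $4$ produces the $\z/4$-summand, and the $S^7$ and $S^8$ cases are the suspended avatars of the same dichotomy. Hence the whole theorem reduces to proving $2\cn'_6\neq0$, i.e. $\ord(\cn'_6)=4$.

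To set up the computation I would first justify the containment $\cn'_6\in\{\s\e_5,\s(\nu_5\ca_8),\s(\ch_9\ct_{17})\}_1$ recorded in Proposition~\ref{zdl2}. The natural route is to evaluate the James--Hopf invariant of the bracket and match it with $H(\cn'_6)=\ch_{11}\ct_{19}$: since $\s^{2}(\ch_9\ct_{17})=\ch_{11}\ct_{19}$, and since the first entry $\s\e_5=\e_6$ is the identity, the index-$1$ bracket is designed exactly so that its Hopf invariant is the double suspension of its third entry. As $H$ determines a class of $\pi_{39}(S^6)$ modulo the suspension image $E\pi_{38}(S^5)$, the bracket and $\cn'_6$ then agree up to a suspended correction that is absorbed into the indeterminacy. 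The index $1$ is indispensable here, as no ordinary Toda bracket exists with an identity first entry; it is precisely the desuspension bookkeeping that renders the expression meaningful.

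The heart of the proof is the evaluation of $2\cn'_6$. I would transport the factor $2$ across the bracket by a juggling formula, writing $2\cn'_6=\cn'_6\cp 2\e_{39}$ and converting it to (a representative of) the lower Toda bracket $\{\nu_6\ca_9,\ch_{10}\ct_{18},2\e_{38}\}$, which is legitimate because $2\ch=0$ forces $\ch_{10}\ct_{18}\cp2\e_{38}=0$ while the composability $\nu_6\ca_9\cp\ch_{10}\ct_{18}=0$ is inherited from the defining data of the original bracket. This last bracket can be evaluated through the classical relations binding $2\e$ to $\ca$ (for instance $\ca^{2}\in\{2\e,\ca,2\e\}$), and the aim is to identify it with a nonzero class of $\Ker H\subseteq\pi_{39}(S^6)$---necessarily in the suspension image, since $H(2\cn'_6)=2\ch_{11}\ct_{19}=0$---by matching against a generator already tabulated by Oda and by Miyauchi--Mukai. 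The main obstacle lies exactly here: one must control the indeterminacy $\ind\{\nu_6\ca_9,\ch_{10}\ct_{18},2\e_{38}\}$ together with that of the index-$1$ bracket, and prove that the representative of $2\cn'_6$ is not swallowed by it, so that $2\cn'_6$ is genuinely nonzero rather than merely lying in a coset meeting $0$.

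Finally, with $\ord(\cn'_6)=4$ established, I would deduce $\pi_{40}(S^7)$ and $\pi_{41}(S^8)$ by propagating the metastable $EHP$ sequence upward in $n$ via Thomeier's $8$-step backward theorem, anchored by the known value $\pi_{42}(S^9_{(2)})$ of Miyauchi--Mukai. Concretely one follows the suspensions $E\cn'_6\in\pi_{40}(S^7)$ and $E^2\cn'_6\in\pi_{41}(S^8)$, deciding in each case whether the order-$4$ summand persists by testing $2E\cn'_6$ against $\Ker E=\mathrm{Im}\,P$ with the Whitehead-product boundary $P$ and the Hopf invariant $H$ of the relevant exact sequences; the $\z/8$-summand of $\pi_{41}(S^8)$ is inherited from the metastable structure already recorded by Oda and Miyauchi--Mukai rather than from the new computation. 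Assembling these exact sequences with the orders now settled yields the three displayed isomorphisms.
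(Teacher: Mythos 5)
Your high-level plan coincides with the paper's: everything reduces to $\ord(\cn'_{6})=4$, the element $\cn'_{6}$ is realized inside the index-$1$ bracket $T=\n\s\e_{5},\s(\nu_{5}\ca_{8}),\s(\ch_{9}\ct_{17})\nn_{1}$, the factor $2$ is transported by juggling, and the three groups are then assembled from Oda's tables. But the step that is supposed to do the real work fails. You convert $\cn'_{6}\hc 2\e_{39}$ into the level-$6$ bracket $\n\nu_{6}\ca_{9},\ch_{10}\ct_{18},2\e_{38}\nn$, all of whose entries are suspensions. Since $\nu_{5}\ca_{8}=[\e_{5},\e_{5}]=P\e_{11}$ (Toda (5.10)), its suspension $\nu_{6}\ca_{9}$ is \emph{zero}; hence the bracket you land in is $\n 0,\ch_{10}\ct_{18},2\e_{38}\nn$, which contains $0$ and is the full coset of its indeterminacy $\pi_{39}^{6}\hc 2\e_{39}=2\pi_{39}^{6}$. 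The containment ``$2\cn'_{6}$ lies in this bracket'' is then the tautology $2\cn'_{6}\in 2\pi_{39}^{6}$, and the ``main obstacle'' you flag --- showing the representative is not swallowed by the indeterminacy --- is not a technical nuisance but an impossibility along this route: the coset \emph{is} $2\pi_{39}^{6}$, so it contains $0$ and $2\cn'_{6}$ simultaneously no matter how representatives are chosen. Your proposed evaluation via relations such as $\ca^{2}\in\n 2\e,\ca,2\e\nn$ also aims at the wrong target: the actual value of $2\cn'_{6}$ is $\nu_{6}\cg_{9}\nu_{16}\ct_{19}$, and it is detected by the \textit{EHP} sequence, not by $\ca$-relations.

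The paper's juggling goes in the opposite direction: it keeps the bracket \emph{desuspended} at the $S^{5}$ level, which is precisely what the index-$1$ formalism buys. By Toda's Proposition 1.4, $2\cn'_{6}\in T\hc 2\e_{39}=\s\e_{5}\hc\s\n\nu_{5}\ca_{8},\ch_{9}\ct_{17},2\e_{37}\nn\dyd\s\n\nu_{5}\ca_{8},\ch_{9}\ct_{17},2\e_{37}\nn_{1}$, where the inner brackets are formed on $S^{5}$ with \emph{nonzero} first entry $[\e_{5},\e_{5}]$. The payoff is the indeterminacy: $\mathrm{Ind}\n\nu_{5}\ca_{8},\ch_{9}\ct_{17},2\e_{37}\nn=[\e_{5},\e_{5}]\hc\pi_{38}^{9}+2\pi_{38}^{5}$ dies after a single suspension (the Whitehead product suspends to $0$, and $2\s\pi_{38}^{5}=0$ by Oda (9.25)), so $T\hc2\e_{39}$ is a \emph{single element}. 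That element is then computed through the \textit{EHP} sequence: $H\n\nu_{5}\ca_{8},\ch_{9}\ct_{17},2\e_{37}\nn_{1}=-P^{-1}(\nu_{4}\ca_{7}\ch_{8}\ct_{16})\hc2\e_{38}=8\cg_{9}\cg_{16}^{*}=H(\nu_{5}\cg_{8}\nu_{15}\ct_{18})$, whence by exactness a representative $\nu_{5}\cg_{8}\nu_{15}\ct_{18}+\s\beta$ with $\beta\in\pi_{37}^{4}$ (Lemma 2.3(7)); finally a stabilization argument ($\pi_{33}^{S}(S^{0})\tg(\z/2)^{5}$, the span of $\mu_{6,6},\ca_{6}\mu_{3,7}\cg_{32}$ injects stably, while $\cg\nu=0$ kills $\nu_{6}\cg_{9}\nu_{16}\ct_{19}$ stably) removes $\s^{2}\beta$ and yields $2\cn'_{6}=\nu_{6}\cg_{9}\nu_{16}\ct_{19}\neq0$. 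Without this desuspend-then-suspend mechanism and the $H$/$P^{-1}$ evaluation, your outline cannot close. A secondary point: Thomeier's theorem plays no role for $\pi_{40}^{7}$ and $\pi_{41}^{8}$ (in the 33-stem it governs only $n\geq28$); those two groups follow from Oda's Proposition 9.20 once the order of $\cn'_{6}$ and of its suspensions is settled.
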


\noindent {\bf Acknowledgement}. This work is supported in part by the start-up research fund from Beijing Institute of Mathematical
Sciences and Applications.  We are  indebted  to Prof. Juno Mukai  and Dr. Xiangxuan Yi for   many fruitful conversations on
this project.
\section{Preliminaries}

\subsection{Notations }
 In this paper, all spaces, maps, homotopy classes are pointed.  Basepoints and constant maps are denoted by $*$,   homotopy classes of constant maps are denoted by $0$. To indicate the domain and codomain, the trivial element in $\pi_{n+k}(S^{n})$ is also denoted by $0_{n}^{(k)}$.

 \indent  Let $\alpha\in\pi_{n}(X),\beta\in\pi_{m}(S^{n})$\, where $n\geq2$ and let $k\in\z$; usually and reasonably, $\alpha\beta$ is the the abbreviation of $\alpha\circ\beta$ and $ k\alpha\circ\beta$ is the the abbreviation of $(k\alpha)\circ\beta$;  in this article we only use the symbol $k\alpha\beta$  to denote $k(\alpha\beta)$.\;So it's necessary to  point out that  $k\alpha\beta\neq k\alpha\circ\beta \;\text{in general}.$  
Of course $k\alpha\beta= k\alpha\circ\beta$ always holds  if $\beta$ is a suspension, or the codomain of $\alpha$  is $S^{7}$ or   a  group-like \textit{H}-space (\cite[p.\,118]{GW}), in particular, a topological group. 

The homotopy group $\pi_{m}(S^{n}_{(2)})$ is also denoted by $\pi_{m}^{n}$. We follow Toda's notations in \cite{Toda} of the generators of 
$\pi_{*}^{n}$, whose notations and  naming 
 convention are also adopted by \cite{20STEM},\cite{2122STEM},\cite{2324STEM},\cite{Oda},\cite{32STEM} and so on. Recall from \cite{Toda} that a set containing a single element is  identified with its element. 
We denote Toda's $E$ by $\s$, the  suspension functor, and we  denote Toda's    $\q$ by $P$, the  boundary homomorphism of the  \textit{EHP} sequence. 
  There is an  advantage of Toda's naming convention, that is, we can examine the commutativity of the unstable composition conveniently (also see \cite[Proposition 3.1]{Toda}). Given $a,b\in\z_{+}$,  let $\mathbbm{x}_{k+a}=\s^{k}\mathbbm{x}_{a}$ and  $\mathbbm{y}_{k+b}=\s^{k}\mathbbm{y}_{b}$\, ($k\geq0$) where $\mathbbm{x}_{a}\in\pi_{*}^{a}$ and $\mathbbm{y}_{b}\in\pi_{\bullet}^{b}$; then,
  $$\mathbbm{x}_{n}\hc \mathbbm{y}_{i}=\pm \mathbbm{y}_{n}\hc\mathbbm{x}_{j}  \;\;\text{ for some}\; i, j\;\;\; \text{if} \;\;n\geq a+b.$$
 For instance, for the  elements  in \cite{Toda},  $\cg_{n}\in\pi_{7+n}^{n}\,(n\geq8) \;\text{and} \;\mu_{n}\in\pi_{9+n}^{n}\,(n\geq3),$ we have  $\cg_{8+3}\mu_{i}=\pm \mu_{8+3}\cg_{j}$, ($\ord( \mu_{3})$=2). Successively,\; $\cg_{11}\mu_{18}=\mu_{11}\cg_{20}$. But  $\cg_{10}\mu_{17}\neq\mu_{10}\cg_{19}$, (see \cite[p.\,156]{Toda}).
 Some common  generators are summarized in \cite[p.\,189]{Toda} and \cite[(1.1),\,p.\,66]{Oguchi}.

  Let $(G,+)$ be an abelian group and $A$ be a subgroup,  $g,g'\in G$; following Oda (\cite{Oda}), we write  $g\ty g'\m A$ if $g-g'\in A.$ If $A$ is generated by $\n a_{ \lambda }\nn _{ \lambda \in \Lambda }$, then ``$\m A$\,'' is  denoted by ``$\m\,a_{ \lambda },\, ( \lambda \in \Lambda )$\,''.

Suppose $U$ is an abelian group and $V$ is a subgroup. If we have a coset $S\in U/V$, then $V$ is often called the  \textit{\textbf{indeterminacy}} of $S$, denoted by $\mathrm{Ind}(S)=V$.

\subsection{Some fundamental facts}
Let us recall the \textit{3}-fold Toda bracket.
Given the following sequence of spaces and homotopy classes with  $Z$ is a suspension such that $\af\hc \s^{n}\beta=\beta\gamma=0$,\vspace{-0.3\baselineskip}
\[\xymatrix@!C=0.83cm{ &W&  \s^{n}X\; & \s^{n}Y  &\s^{n}Z,& 
       \ar"1,4";"1,3" _{\;\s^{n}\beta_{}} \ar"1,5";"1,4" _{ 
 \s^{n}\gamma}
\ar"1,3";"1,2" _{\af}
     }\]
in 1962, Toda (\cite{Toda}) defined the \textit{3}-fold  Toda bracket indexed by $n$ denoted by  $\n\af,\s^{n}\beta,\s^{n}\gamma\nn_{n}$.   Such a Toda bracket is a coset included in $[\s^{n+1}Z,W]$, that is,  $\n\af,\s^{n}\beta,\s^{n}\gamma\nn_{n}\in [\s^{n+1}Z,W]/A,$\noindent \;where the subgroup \begin{equation}
     A=\af\hc \s^{n}[\s Z,X]+[\s^{n+1}Y, W]\hc \s^{n+1}\gamma. \label{ybsind}
 \end{equation} 
It is clear that  
$f\hc \n\af,\s^{n}\beta,\s^{n}\gamma\nn_{n}$
is a coset of $f\hc A$,\;and $\n\af,\s^{n}\beta,\s^{n}\gamma\nn_{n}\hc \s g$\;
      is a coset of $A\hc \s g$, ($f,g:$ maps). For a fixed $n$,  $\n \af,\s^{n}\beta,\s^{n}\gamma\nn_{n}$   depends only on $(\af, \beta,\gamma)$ but not $(\af, \s^{n}\beta,\s^{n}\gamma)$.  It is necessary to point out that  even if\; $\s^{n}\beta'=\s^{n}\beta, \s^{n}\gamma'=\s^{n}\gamma \; \text{and}\; \beta'\gamma'=0$,
      $$\n \af,\s^{n}\beta',\s^{n}\gamma'\nn_{n}\neq\n \af,\s^{n}\beta,\s^{n}\gamma\nn_{n}\;\text{in general},$$
  (see \cite[Remark 3.1]{dd}).
   For more basic properties of Toda brackets, see \cite[pp.\,10-12]{Toda}.

\subsection{Some relations on the generators of $\pi_{*}^{n}$}

In the generator set of $\pi_{34}^{5}$, the generator $\phi_{5}\nu_{29}^{2}$ can be replaced by  $\nu_{5}\cg_{8}\ccs_{15}$. As a matter of fact, taking into account $H(\nu_{5}\cg_{8}\ccs_{15})=H(\kj_{5}\nu_{29}^{2})=\nu_{9}^{2}\ccs_{15}$ (\cite[Proposition 4.3 (2), pp.\,91]{Oda}) and $\pi_{*}^{4}=\nu_{4}\hc\pi_{*}^{7}\jia\s\pi_{*-1}^{3}$ (\cite[(5.6), pp.\,42]{Toda}), together with the results provided on \cite[pp.\,104, pp.\,143]{Oda}), we have the following remark. 

\begin{rem}\label{hjzj}
    \begin{itemize}
        \item [\rm(1)] $\pi_{34}^{5}=\mathrm{span}\n \nu_{5}\cg_{8}\ccs_{15}, \nu_{5}\ct_{8}\nu_{28}^{2},\nu_{5}^{3}\ct_{14},\nu_{5}\ca_{5}\mu_{3,9},\ca_{5}\lt_{6}\ct_{14}\nn\tg(\z/2)^{6}.$
        \item [\rm(2)] $\pi_{35}^{6}=\mathrm{span}\n \cx',\bar{\lt}',\nu_{6}\cg_{9}\ccs_{16},\ca_{6}\lt_{7}\ct_{15}\nn\tg(\z/4)^{2}\jia(\z/2)^{2}.$
        \item [\rm(3)]  $\pi_{36}^{7}=\mathrm{span}\n \cx'',\cg'\lt_{14}\cn_{22}, \cg'\w_{14}\nu_{30}^{2},
 \nu_{7}\cg_{10}\ccs_{17},\ca_{7}\lt_{8}\ct_{16}\nn\tg\z/8\jia(\z/2)^{4}.$
        \item [\rm(4)]  $\pi_{36}^{4}=\mathrm{span}\n \nu_{4}\cx'',\nu_{4}\cg'\lt_{14}\cn_{22}, \nu_{4}\cg'\w_{14}\nu_{30}^{2},
 \nu_{4}^{2}\cg_{10}\ccs_{17},\nu_{4}\ca_{7}\lt_{8}\ct_{16}, \s\kj'\hc\cg_{29},\s\nu'\hc\ca_{7}\lt_{8}\ct_{16},\mu_{3,4}\cg_{29}\nn\\\indent\quad\;\,\tg\z/8\jia(\z/2)^{4}\jia\z/4\jia(\z/2)^{2}.$
    \end{itemize}
\end{rem}
The following lemma is intended to lay the groundwork for the subsequent subsection.
\begin{lem}\label{qmgx}\begin{itemize}
   
    \item[\rm(1)] 
 $\ca_{5}\ch_{6}=\nu_{5}^{3}\in\pi_{14}^{5};\;$ $2\e_{14}\hc\w_{14}=4\w_{14}\in\pi_{30}^{14}.$
   \item[\rm(2)]
   $4(\nu_{4}\hc\cx'')=\nu_{4}\hc4\cx''\ty\nu_{4}\hc \nu_{7}^{3}\ct_{16}=\nu_{4}\hc\ca_{7}\ch_{8}\ct_{16}\m \nu_{4}\hc\nu_{7}\cg_{10}\ccs_{17}$.
    \item[\rm(3)] $P(4\cg_{9}\cg_{16}^{*})=4(\nu_{4}\hc\cx'')\ty\nu_{4}\ca_{7}\ch_{8}\ct_{16}\m\nu_{4}^{2}\cg_{10}\ccs_{17}.$ And so \\$P(4\cg_{9}\cg_{16}^{*})=4(\nu_{4}\hc\cx'')=\nu_{4}\ca_{7}\ch_{8}\ct_{16}+x\nu_{4}^{2}\cg_{10}\ccs_{17}$ for some $x\in\z.$

\item[\rm(4)] $P(\cg_{9}\nu_{16}\ccs_{19})=\nu_{4}^{2}\cg_{10}\ccs_{17}\in\pi^{4}_{36}$
\item[\rm(5)] $4\cg_{9}\cg_{16}^{*}+x\cg_{9}\nu_{16}\ccs_{19}\in P^{-1}(  \nu_{4}\ca_{7}\ch_{8}\ct_{16})$.

   \item[\rm(6)]  $P^{-1}(  \nu_{4}\ca_{7}\ch_{8}\ct_{16})=4\cg_{9}\cg_{16}^{*}+x\cg_{9}\nu_{16}\ccs_{19}+\mathrm{span}\n8\cg_{9}\cg_{16}^{*}\nn.$
    \item[\rm(7)] $\nu_{5}\cg_{8}\nu_{15}\ct_{18}+\s\beta\in \n \nu_{5}\ca_{8}, \ch_{9}\ct_{17},2\e_{37}\nn_{1}$ for some $\beta\in \pi^{4}_{37}.$
    \item[\rm(8)]  $\s^{2}\pi_{37}^{4}=\mathrm{span}\n\mu_{6,6},\;\ca_{6}\mu_{3,7}\cg_{32}\nn\tg\z/2.$ And so for the above element $\beta$, $\s^{2}\beta\in\mathrm{span}\n\mu_{6,6},\;\ca_{6}\mu_{3,7}\cg_{32}\nn.$

\end{itemize}
    
\end{lem}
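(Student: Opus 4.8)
The plan is to prove the eight items in order, since (3)--(6) build on (1)--(2) and (7)--(8) reuse the same toolkit. Three devices recur. First, the distributivity correction $(m\e_n)\hc\af=m\af+\binom{m}{2}[\e_n,\e_n]\hc H(\af)$ together with the tabulated Hopf invariants. Second, the exactness and naturality of the \textit{EHP} sequence, in particular the compatibility $P(\af\hc\s^{2}\gamma)=P(\af)\hc\gamma$ and the value $P(\e_{2n+1})=[\e_n,\e_n]$, which reduce each $P$ to a known composite. Third, the definition of the \textit{3}-fold Toda bracket from Section 2.2 with its indeterminacy \eqref{ybsind}. Throughout I would read the group structures of $\pi_{34}^5,\pi_{35}^6,\pi_{36}^7,\pi_{36}^4$ off Remark \ref{hjzj}, and those of $\pi_{38}^9$, $\pi_{37}^4$ and the stable stems in play off Toda, Oda and Miyauchi--Mukai.

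For (1), the relation $\ca_5\ch_6=\nu_5^{3}$ is the $n=5$ instance of the stable identity $\ca\ch=\nu^{3}$ (all three factors being suspensions), so it is forced by the value of $\pi_{14}^5$; the second relation is pure distributivity, $2\e_{14}\hc\w_{14}=2\w_{14}+[\e_{14},\e_{14}]\hc H(\w_{14})$, and I would identify the correction term $[\e_{14},\e_{14}]\hc H(\w_{14})$ with $2\w_{14}$ from the known values of $H(\w_{14})$ and of the Whitehead product. For (2), I would locate $4\cx''$ inside the $\z/8$ summand of $\pi_{36}^7$ (Remark \ref{hjzj}(3)): it is the order-two element, which Oda's relations equate with $\nu_7^{3}\ct_{16}$ modulo $\nu_7\cg_{10}\ccs_{17}$. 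Substituting $\nu_7^{3}=\ca_7\ch_8$ from (1) and applying the homomorphism $\nu_4\hc(-)$ (left composition is always additive, so $\nu_4\hc 4\cx''=4(\nu_4\hc\cx'')$) yields the stated congruence modulo $\nu_4\hc\nu_7\cg_{10}\ccs_{17}$.

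Items (3)--(6) are the \textit{EHP} core, all turning on the boundary $P\colon\pi_{38}^9\to\pi_{36}^4$. For (3) I would compute $P(4\cg_9\cg_{16}^{*})$ via the composition law for $P$ and the known value of $P(\cg_9)$, and identify it with $4(\nu_4\hc\cx'')$ --- this matching being forced because both elements share the same value under a suitable Hopf invariant and sit in the same cyclic piece; rewriting by (2) then gives the congruence, and clearing the ``$\m$'' produces the displayed equality with an undetermined $x\in\z$. For (4) the identical boundary computation applied to $\cg_9\nu_{16}\ccs_{19}$ returns precisely $\nu_4^{2}\cg_{10}\ccs_{17}$. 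Then (5) is immediate from the additivity of $P$: adding $x$ times (4) to (3) and using that $\nu_4^{2}\cg_{10}\ccs_{17}$ has order $2$ (whence $2x$ times it vanishes) places $4\cg_9\cg_{16}^{*}+x\cg_9\nu_{16}\ccs_{19}$ in $P^{-1}(\nu_4\ca_7\ch_8\ct_{16})$. Finally (6) upgrades this to the full coset: $P^{-1}(\nu_4\ca_7\ch_8\ct_{16})$ is the representative from (5) plus $\Ker P$, and I would pin down $\Ker P$ in this degree as $\mathrm{span}\n 8\cg_9\cg_{16}^{*}\nn$ by exactness, i.e. by computing the image of the suspension $E$ (equivalently $\Ker H$) landing in the relevant summand of $\pi_{38}^9$.

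The crux is (7), the Toda bracket $\n\nu_5\ca_8,\ch_9\ct_{17},2\e_{37}\nn_1\subseteq\pi_{38}^5$. First I must verify it is defined, i.e. that the composites $\nu_5\ca_8\hc\ch_9\ct_{17}$ and $2\ch_8\ct_{16}$ both vanish; the latter holds because $\ch_8\ct_{16}$ has order $2$, while the former is a composition to be read off the tables. Then I would exhibit the representative $\nu_5\cg_8\nu_{15}\ct_{18}+\s\beta$ by a juggling argument, rewriting the bracket through an adjacent relation and tracking the outcome modulo the indeterminacy \eqref{ybsind}, where $\s\beta$ (with $\beta\in\pi_{37}^4$) absorbs the suspension ambiguity. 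I expect this to be the main obstacle: it demands both the unstable nullhomotopies that make the bracket defined and tight enough control of the indeterminacy to isolate the term $\nu_5\cg_8\nu_{15}\ct_{18}$. Item (8) then disposes of the residual $\beta$: from the tabulated $\pi_{37}^4$ I would compute its double suspension $\s^{2}\pi_{37}^4=\mathrm{span}\n\ci_{6,6},\ca_6\ci_{3,7}\cg_{32}\nn\tg\z/2$, by determining which generators survive two suspensions (those annihilated by the intervening $P$-maps dropping out), which at once forces $\s^{2}\beta$ into that span.
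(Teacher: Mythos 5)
Your skeleton matches the paper's, and items (2), (4), (5) and (8) are handled essentially as the paper handles them. But there is a decisive gap at item (7). You label it ``the main obstacle'' and then offer only ``a juggling argument, rewriting the bracket through an adjacent relation,'' which is not an argument; and you never connect items (3)--(6) to it, even though producing that connection is the whole reason they are in the lemma. The paper's mechanism is: apply the Hopf invariant to the bracket via Toda's Proposition 2.6, giving $H\n \nu_{5}\ca_{8}, \ch_{9}\ct_{17},2\e_{37}\nn_{1}=-P^{-1}(\nu_{4}\ca_{7}\ch_{8}\ct_{16})\hc 2\e_{38}$; by item (6) this coset collapses to the single element $8\cg_{9}\cg_{16}^{*}$ (the order-$2$ element $\cg_{9}\nu_{16}\ccs_{19}$ and the order of $\cg_{9}\cg_{16}^{*}$ kill everything else after composing with $2\e_{38}$); then identify $8\cg_{9}\cg_{16}^{*}=\nu_{9}^{3}\ct_{18}=H(\nu_{5}\cg_{8}\nu_{15}\ccs_{18})$ using item (2), Toda (7.19) and Oda's Proposition 4.4(6); finally, exactness of the \textit{EHP} sequence ($\Ker H=\mathrm{Im}\,\s$) says any element of the bracket differs from $\nu_{5}\cg_{8}\nu_{15}\ct_{18}$ by some $\s\beta$, $\beta\in\pi_{37}^{4}$. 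Without computing $H$ of the bracket you have no route to the stated representative, and ``control of the indeterminacy'' is not what does the work.

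Two smaller but real defects. In (1), the correction term is $[\e_{14},\e_{14}]\hc H(\w_{14})=P(\nu_{29})=\pm 2\w_{14}$, with the sign undetermined in Toda; if the sign were $-$, the answer would be $0$, not $4\w_{14}$, so you must exclude that, as the paper does, by checking $H(2\e_{14}\hc\w_{14})=4\nu_{27}\neq 0$. In (3), your plan (composition law for $P$ plus ``Hopf-invariant matching'') fails twice over: even granting that $\cg_{16}^{*}$ desuspends twice, evaluating $P(\cg_{9})\hc\cg_{14}^{*}=(y\nu_{4}\cg'\pm\s\lt')\hc\cg_{14}^{*}$ needs compositions ($\cg'\hc\cg_{14}^{*}$, $\lt'\hc\cg_{13}^{*}$) that are nowhere in your toolkit, and agreement of Hopf invariants only pins an element of $\pi_{36}^{4}$ down modulo $\s\pi_{35}^{3}$, so nothing is ``forced.'' The paper instead quotes Oda's value $P(\cg_{9}\cg_{16}^{*})=t\nu_{4}\cx''+\mathbbm{b}$ with $t$ odd and $\mathbbm{b}$ lying in a subgroup of exponent $4$, so multiplying by $4$ kills $\mathbbm{b}$ and reduces $4t$ to $4$ modulo the order of $\nu_{4}\hc\cx''$, giving $P(4\cg_{9}\cg_{16}^{*})=4(\nu_{4}\hc\cx'')$ outright. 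Finally, in (6) your parenthetical identification of $\Ker P$ with ``the image of the suspension $E$ (equivalently $\Ker H$)'' is the wrong exactness pairing: $\Ker(P\colon\pi_{38}^{9}\to\pi_{36}^{4})=\mathrm{Im}(H\colon\pi_{38}^{5}\to\pi_{38}^{9})$; the paper simply cites Oda's (9.14) for this kernel.
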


\begin{proof}\begin{itemize}

    \item[\rm(1)] The first one  is just \cite[(7.3), pp.\,64]{Toda}. For the second one, by \cite[Lemma 12.5, pp.\,155]{Toda}, we have $H(\w_{14})=\nu_{27}$. The final line on \cite[pp.\,159]{Toda} states that $P(\nu_{29})=\pm2\w_{14}$.  Recall from \cite[Proposition 2.2, pp.\,18]{Toda} that $H(\s\af\hc\beta)=\s(\af\wedge \af)\hc H(\beta)$. Then the result follows from \cite[Proposition 2.10 (2)]{Oda} and $H(2\e_{14}\hc\w_{14})=4\nu_{27}\neq0.$
    \item[\rm(2)] By (1), we infer $\nu_{7}^{3}=\ca_{7}\ch_{8}.$  We  need only to show $4\cx''\ty\nu_{7}^{3}\ct_{16}\m\nu_{7}\cg_{10}\ccs_{17}$. This follows from the proof of \cite[(7.5), pp.\,130]{Oda}.
    \item[\rm(3)]  By the first line on \cite[pp.\,144]{Oda} and Remark \ref{hjzj}\,(3), we derive $P(\cg_{9}\cg_{16}^{*})=t\nu_{4}\cx'' +\mathbbm{b}$
where $t$ is odd and $\mathbbm{b}\in\mathrm{span}\n \nu_{4}\cg'\lt_{14}\cn_{22}, \nu_{4}\cg'\w_{14}\nu_{30}^{2},
 \nu_{4}^{2}\cg_{10}\ccs_{17},\nu_{4}\ca_{7}\lt_{8}\ct_{16}, \s\kj'\hc\cg_{29},\s\nu'\hc\ca_{7}\lt_{8}\ct_{16},\mu_{3,4}\cg_{29}\nn$ which is isomorphic to $(\z/2)^{4}\jia\z/4\jia(\z/2)^{2}.$ Then, $P(4\cg_{9}\cg_{16}^{*})=4\nu_{4}\cx''$. By (3), we infer the result.
    
    \item[\rm(4)] By \cite[(7.16), pp.\,69]{Toda}, we derive $P(\cg_{9})= y\nu_{4}\cg'\pm\s\lt'$ for some odd $y$; by \cite[(7.19), pp.\,71]{Toda} we infer $\cg'\nu_{14}\ty \nu_{7}\cg_{10}\m2\nu_{7}\cg_{10}.$ Thus, $\nu_{4}\cg'\nu_{14}\ccs_{17}=\nu_{4}^{2}\cg_{10}\ccs_{17}.$ By \cite[Table I, pp.\,104]{Oguchi}, we have the relation $\lt'\nu_{13}=0\in\pi_{13+3}^
{3}$. Hence, $P(\cg_{9}\nu_{16}\ccs_{19}
)= (y\nu_{4}\cg'\pm\s\lt')\nu_{14}\ccs_{17}=\nu_{4}\cg'\nu_{14}\ccs_{17}=\nu_{4}^{2}\cg_{10}\ccs_{17}.$

   \item[\rm(5)] It follows from (3) and (4).
   
    \item[\rm(6)] Notice in \cite[(9.14), pp.\,144]{Oda} that $\mathrm{Ker}(P: \pi_{38}^{9}\rightarrow\pi_{36}^{4})=\mathrm{span}\n8\cg_{9}\cg_{16}^{*}\nn\tg\z/2.$ Then the result follows from (5).
    \item[\rm(7)] First, $\nu_{5}\ca_{8}\ch_{9}\ct_{17}=P(\ch_{11}\ct_{19})=0$, (the proof of \cite[(9.15), p.\,144]{Oda}). So, the Toda bracket is well-defined. By (6), we have $H \n \nu_{5}\ca_{8}, \ch_{9}\ct_{17},2\e_{37}\nn_{1}=-P^{-1}( \nu_{4}\ca_{7} \ch_{8}\ct_{16})\hc2\e_{38}=8\cg_{9}\cg_{16}^{*}$,\; (a set of a single element).  By (2), \cite[Proposition 3.1\,,\,pp.\,116]{Oda} and $\nu_{9}\cg_{12}\ty 2\cg_{9}\nu_{16}\m 2\nu_{9}\cg_{12}$ (\cite[(7.19), pp.\,71]{Toda}), we have $\nu_{9}^{3}\ct_{18}=8\cg_{9}\cg_{16}^{*}$.
    Notice in \cite[Proposition 4.4\,(6),\,pp.\,122]{Oda} that $H(\nu_{5}\cg_{8}\nu_{15}\ccs_{18})=\nu_{9}^{3}\ct_{18}=8\cg_{9}\cg_{16}^{*}$. Then, the result follows from the exactness of the \textit{EHP} sequence.
      \item[\rm(8)] $\s\pi_{37}^{4}=\mathrm{span}\n \nu_{5}\phi_{8}\cg_{31}, \mu_{4,5}, \ca_{5}\mu_{3,6}\cg_{31}\nn$, (\cite[(9.23), pp.\,145]{Oda}). By the proof of \cite[(9.25), pp.\,145]{Oda}, we derive $\nu_{6}\phi_{9}\cg_{32}=0$. Hence the result holds.
  \end{itemize}  
\end{proof}

\subsection{The element $\cn_{6}'\in\pi_{39}^{6} $ and its order}

Leveraging the groundwork laid out above, we can devise a more refined construction for the element $\cn_{6}'\in\pi_{39}^{6}$, allowing for the precise determination of its order. And we are therefore able to solve Oda's extension problems for the homotopy groups $\pi_{39}^{6}, \pi_{40}^{7} $ and $\pi_{41}^{8}$.

Recall from \cite[Formula (9.16), pp.\,144]{Oda} that the element $\ch_{11}\ct_{19}\in\pi_{39}^{11}$
is of order 2.
\begin{lem} \label{dytd}

Included in $\pi_{39}^{6}$,  the  Toda bracket $$T=\n\s\e_{5}, \s(\nu_{5}\ca_{8}), \s(\ch_{9}\ct_{17}) \nn_{1}$$ is well-defined. And $H(T)=\ch_{11}\ct_{19}\neq0.$

\end{lem}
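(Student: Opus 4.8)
The plan is to read off well-definedness from the two vanishing conditions and then to compute $H(T)$ by the same index-$1$ Toda-bracket Hopf-invariant formula already used in Lemma~\ref{qmgx}(7), which reduces the problem to a low-dimensional instance of the boundary map $P$ that the \textit{EHP} sequence determines outright.

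First I would confirm that $T=\n\s\e_5,\s(\nu_5\ca_8),\s(\ch_9\ct_{17})\nn_1$ is defined. Writing $\af=\s\e_5=\e_6$, $\beta=\nu_5\ca_8$ and $\gamma=\ch_9\ct_{17}$, the outer composite is $\af\hc\s\beta=\s(\nu_5\ca_8)=\nu_6\ca_9\in\pi_{10}^6$, which is the stable $4$-stem and hence $0$; so $\af\hc\s\beta=0$ for free. The inner composite $\beta\gamma=\nu_5\ca_8\ch_9\ct_{17}$ is precisely the element shown to vanish in Lemma~\ref{qmgx}(7), where it is identified with $P(\ch_{11}\ct_{19})=0$. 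Thus $T$ is well-defined.

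For the Hopf invariant I would apply the formula $H\n\af,\s\beta,\s\gamma\nn_1=-P^{-1}(\bar\af\hc\beta)\hc\s^2\gamma$ (mod indeterminacy) used in Lemma~\ref{qmgx}(7), where $\bar\af$ denotes a desuspension of $\af$. The point of recording the first entry as $\s\e_5$ rather than $\e_6$ is exactly that it exhibits the desuspension $\bar\af=\e_5$ that this formula consumes --- the ``desuspension'' virtue of a bracket of index $\geq 1$ emphasized in the introduction. With $\bar\af=\e_5$, $\beta=\nu_5\ca_8$ and $\s^2\gamma=\s^2(\ch_9\ct_{17})=\ch_{11}\ct_{19}$, this gives
\[
H(T)=-P^{-1}(\e_5\hc\nu_5\ca_8)\hc\ch_{11}\ct_{19}=-P^{-1}(\nu_5\ca_8)\hc\ch_{11}\ct_{19},
\]
a composite landing in $\pi_{39}^{11}$, where now $P$ is the boundary map $P\colon\pi_{11}^{11}\to\pi_9^5$.

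The crux is the identification of $P^{-1}(\nu_5\ca_8)$. One has $P(\e_{11})=[\e_5,\e_5]$, which is nonzero because $S^5$ is not an $H$-space and therefore generates $\pi_9^5\tg\z/2$; and $[\e_5,\e_5]=\nu_5\ca_8$, consistently with $P(\ch_{11}\ct_{19})=P(\e_{11}\hc\s^2(\ch_9\ct_{17}))=[\e_5,\e_5]\hc\ch_9\ct_{17}=\nu_5\ca_8\ch_9\ct_{17}$. Hence $\e_{11}\in P^{-1}(\nu_5\ca_8)$ with indeterminacy $\Ker(P)=\n2k\e_{11}\nn$, and $H(T)=-\e_{11}\hc\ch_{11}\ct_{19}=-\ch_{11}\ct_{19}$. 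Since $\ch_{11}\ct_{19}$ has order $2$, the sign and the whole indeterminacy $\n2k\,\ch_{11}\ct_{19}\nn=0$ drop out, so $H(T)$ collapses to the single element $\ch_{11}\ct_{19}\neq0$. The step I expect to be the main obstacle is justifying the Hopf-invariant formula in exactly this shape together with its indeterminacy: one must verify that $\bar\af\hc\beta=\nu_5\ca_8$ really lies in $\mathrm{Im}(P\colon\pi_{11}^{11}\to\pi_9^5)$ --- the content of $[\e_5,\e_5]=\nu_5\ca_8$ --- and that $P^{-1}$ is taken at the low dimension $\pi_{11}^{11}$ rather than at the dimension of $H(T)$; once $P(\e_{11})=\nu_5\ca_8$ is secured, the order-$2$ collapse makes the conclusion immediate.
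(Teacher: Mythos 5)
Your proposal is correct and takes essentially the same route as the paper: well-definedness from $\nu_{5}\ca_{8}\ch_{9}\ct_{17}=P(\ch_{11}\ct_{19})=0$ (plus the trivial vanishing of $\nu_{6}\ca_{9}$), and the Hopf-invariant claim via the index-$1$ formula $H(T)=-P^{-1}(\e_{5}\hc\nu_{5}\ca_{8})\hc\ch_{11}\ct_{19}$ together with $P\e_{11}=[\e_{5},\e_{5}]=\nu_{5}\ca_{8}$, which is exactly the content of the paper's citations of Toda's Proposition 2.6 and Toda (5.10). The only cosmetic difference is that you justify $[\e_{5},\e_{5}]=\nu_{5}\ca_{8}$ by the non-$H$-space property of $S^{5}$ and $\pi_{9}^{5}\tg\z/2$ instead of quoting Toda's formula (5.10) directly.
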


\begin{proof} By \cite[(5.10),\,pp.\,44]{Toda}, we have $P\e_{11}=[\e_{5},\e_{5}]=\nu_{5}\ca_{8}.$ Recall from the proof of \cite[(9.15), p.\,144]{Oda} that $\nu_{5}\ca_{8}\ch_{9}\ct_{17}=P(\ch_{11}\ct_{19})=0$. So, $T$ is well defined. Then, $H(T)=\ch_{11}\ct_{19}\neq0$ follows from \cite[Proposition 2.6, pp.\,22-23]{Toda}.

\end{proof}

Recall from \cite[(9.25), pp.\,145]{Oda} that
 $\nu_{6}\cg_{9}\nu_{16}\ct_{19}$ is of order 2.

\begin{lem}\label{wdka} Choosing $\cn'_{6}\in T$, we have $H(\cn'_{6})=\ch_{11}\ct_{19}$,
 $2\cn'_{6}=\nu_{6}\cg_{9}\nu_{16}\ct_{19}.$ Successively, $\cn'_{6}$ is of order 4. All properties of the original $\cn'_{6}$ defined in \cite[pp.\,145]{Oda} satisfies, this new choice of $\cn'_{6}$ still  satisfies.
\end{lem}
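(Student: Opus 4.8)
The plan is to take the three assertions in turn, the middle one (the value of $2\cn'_{6}$) being the real content, and to read off the order and the compatibility with Oda's element as consequences.

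First, the Hopf invariant costs nothing. Since $\cn'_{6}\in T$ and Lemma \ref{dytd} gives $H(T)=\ch_{11}\ct_{19}$ as a \emph{single} element, the homomorphism $H$ must annihilate the indeterminacy of the coset $T$; hence every representative of $T$ has the same Hopf invariant and $H(\cn'_{6})=\ch_{11}\ct_{19}$ for our choice. I would record at once the consequence $H(2\cn'_{6})=2H(\cn'_{6})=2\ch_{11}\ct_{19}=0$, because $\ch_{11}\ct_{19}$ has order $2$; by exactness of the \textit{EHP} sequence this forces $2\cn'_{6}=\s\lambda$ for some $\lambda\in\pi_{38}^{5}$. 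So $2\cn'_{6}$ is already known to be a suspension, which is consistent with the shape of the claimed answer.

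Next, to evaluate $2\cn'_{6}$ I would exploit the index-$1$ structure of $T$, which is exactly the feature designed to let $2\cn'_{6}$ desuspend. Using the juggling (shuffle) relation for Toda brackets (Toda, pp.\,10--12) together with the defining relation $\nu_{5}\ca_{8}=P\e_{11}=[\e_{5},\e_{5}]$ from Lemma \ref{dytd}, I would identify the desuspension $\lambda$ of $2\cn'_{6}$, modulo $\Ker\s$ and the bracket indeterminacy, with an element of $\n\nu_{5}\ca_{8},\ch_{9}\ct_{17},2\e_{37}\nn_{1}$, the extra suspension being produced precisely by the entry $\e_{6}=\s\e_{5}$. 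This is the bracket already evaluated in Lemma \ref{qmgx}(7), which contains $\nu_{5}\cg_{8}\nu_{15}\ct_{18}+\s\beta$. Re-suspending gives $2\cn'_{6}=\s\lambda=\nu_{6}\cg_{9}\nu_{16}\ct_{19}+\s^{2}\beta$, the sign being irrelevant since $\nu_{6}\cg_{9}\nu_{16}\ct_{19}$ has order $2$. The main obstacle is to control the correction term $\s^{2}\beta$: by Lemma \ref{qmgx}(8) it lies in $\s^{2}\pi_{37}^{4}=\mathrm{span}\n\mu_{6,6},\ca_{6}\mu_{3,7}\cg_{32}\nn\cong\z/2$, and to reach the clean equality $2\cn'_{6}=\nu_{6}\cg_{9}\nu_{16}\ct_{19}$ one must show this $\z/2$-ambiguity vanishes, i.e. $\s^{2}\beta=0$. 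I expect this to be the delicate step, since it does not follow formally from the indeterminacy bookkeeping—re-choosing $\cn'_{6}$ inside its coset only moves $2\cn'_{6}$ by $2\s\pi_{38}^{5}$, which need not reach the order-two class $\mu_{6,6}$—so it seems to require an independent input, for instance a further Hopf-invariant computation or a known relation among the generators of $\pi_{39}^{6}$ excluding the $\mu_{6,6}$ summand.

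Finally, I would stress that the order-$4$ conclusion is robust to this last ambiguity. Since $\nu_{6}\cg_{9}\nu_{16}\ct_{19}$, $\mu_{6,6}$ and $\ca_{6}\mu_{3,7}\cg_{32}$ all have order $2$ and are linearly independent in Oda's partial description of $\pi_{39}^{6}$, we get $2\cn'_{6}\neq0$ while $4\cn'_{6}=2(2\cn'_{6})=0$, so $\ord(\cn'_{6})=4$ whether or not $\s^{2}\beta$ turns out to vanish. For the compatibility statement, Oda's $\cn'_{6}$ is characterized only up to $\Ker H=\s\pi_{38}^{5}$ by the property $H(\cn'_{6})=\ch_{11}\ct_{19}$; our representative has the same Hopf invariant, hence differs from Oda's by an element of $\s\pi_{38}^{5}$, and every property Oda derived from $\cn'_{6}$ depends only on this Hopf-invariant data and so transfers verbatim to the new choice.
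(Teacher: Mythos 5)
Your proposal tracks the paper's own proof up to the identity $2\cn'_{6}=\nu_{6}\cg_{9}\nu_{16}\ct_{19}+\s^{2}\beta$ with $\s^{2}\beta\in\mathrm{span}\n\mu_{6,6},\,\ca_{6}\mu_{3,7}\cg_{32}\nn$: the same reading of the single-valuedness of $H(T)$, the same shuffle of $2\e_{39}$ into the bracket via Toda's Proposition 1.4 and Lemma \ref{qmgx}\,(7), and essentially the same indeterminacy bookkeeping (the paper makes this precise by noting $\mathrm{Ind}\n \nu_{5}\ca_{8},\ch_{9}\ct_{17},2\e_{37}\nn=[\e_{5},\e_{5}]\hc\pi_{38}^{9}+2\pi_{38}^{5}$, which suspends to $2\s\pi_{38}^{5}=0$, so the suspended bracket is a single element and the equation is exact, not merely a congruence). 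Your fallback argument for the order is also correct: since $\nu_{6}\cg_{9}\nu_{16}\ct_{19}$, $\mu_{6,6}$, $\ca_{6}\mu_{3,7}\cg_{32}$ are independent order-two classes in $\s\pi_{38}^{5}$, one gets $2\cn'_{6}\neq0$ and $4\cn'_{6}\in2\s\pi_{38}^{5}=0$ regardless of the value of $\s^{2}\beta$, so $\ord(\cn'_{6})=4$. Your handling of the Hopf invariant and of the compatibility with Oda's $\cn'_{6}$ likewise matches the paper.

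The genuine gap is exactly where you flagged it: the lemma asserts the clean equality $2\cn'_{6}=\nu_{6}\cg_{9}\nu_{16}\ct_{19}$ (this is what Proposition \ref{zdl2} records), and you leave $\s^{2}\beta=0$ unproved, speculating that it needs "an independent input" such as a further Hopf-invariant computation. It does need an independent input, but the paper's tool is stabilization, not the Hopf invariant. Apply $\s^{\wq}$ to $2\cn'_{6}=\nu_{6}\cg_{9}\nu_{16}\ct_{19}+\s^{2}\beta$. Since $\pi_{33}^{S}(S^{0})\tg(\z/2)^{5}$ is elementary abelian (\cite[pp.\,315]{brwd}), we have $\s^{\wq}(2\cn'_{6})=2\s^{\wq}\cn'_{6}=0$; moreover $\s^{\wq}(\nu_{6}\cg_{9}\nu_{16}\ct_{19})=0$ because $\cg\nu=0$ stably (\cite[(7.20), pp.\,72]{Toda}). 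Hence $\s^{\wq}(\s^{2}\beta)=0$. But by the proof of \cite[(9.27), pp.\,146]{Oda}, $\s^{\wq}\mathrm{span}\n\mu_{6,6},\,\ca_{6}\mu_{3,7}\cg_{32}\nn\tg(\z/2)^{2}$ is a direct summand of $\pi_{33}^{S}(S^{0})$, so $\s^{\wq}$ is injective on this span; therefore $\s^{2}\beta=0$, giving the stated equality. Without this step your write-up proves the order statement but not the full lemma.
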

\begin{proof} By Lemma \ref{dytd}, we see that $H(\cn'_{6})=\ch_{11}\ct_{19}\neq0$ for $\cn'_{6}\in T$. By \cite[Proposition 1.4, pp.\,11]{Toda} and Lemma \ref{qmgx} (7), we have $$2\cn_{6}'\in T\hc 2\e_{39}= \s\e_{5}\hc \s\n \nu_{5}\ca_{8}, \ch_{9}\ct_{17},2\e_{37} \nn_{}\dyd \s\n \nu_{5}\ca_{8}, \ch_{9}\ct_{17},2\e_{37} \nn_{1}\ni\s(\nu_{5}\cg_{8}\nu_{15}\ct_{18}+\s\beta)$$ where $\beta\in \pi_{37}^{4}$.   Notice that $ \mathrm{Ind}\n\nu_{5}\ca_{8}, \ch_{9}\ct_{17},2\e_{37} \nn_{}=[\e_{5},\e_{5}]\hc\pi^{9}_{38}+2\pi_{38}^{5}$ and $2\s\pi_{38}^{5}=0$, (\cite[(9.25),\,pp.\,145]{Oda}).  Then,  $ \mathrm{Ind}(\s\n\nu_{5}\ca_{8}, \ch_{9}\ct_{17},2\e_{37} \nn_{})=2\s\pi_{38}^{5}=0$. So, $2\cn'_{6}=\nu_{6}\cg_{9}\nu_{16}\ct_{19}+\s^{2}\beta.$
Recall from Lemma \ref{qmgx} (8) that
$\s^{2}\beta\in\mathrm{span}\n\mu_{6,6},\;\ca_{6}\mu_{3,7}\cg_{32}\nn.$ So, $$2\cn'_{6}\ty\nu_{6}\cg_{9}\nu_{16}\ct_{19}\m \mu_{6,6},\ca_{6}\mu_{3,7}\cg_{32}.$$
Recall that
$\s^{\wq} \mathrm{span}\n  \mu_{6,6},\ca_{6}\mu_{3,7}\cg_{32}\nn\tg(\z/2)^{2}$ is a direct summand of $\pi_{33}^{S}(S^{0})$, (the proof \cite[(9.27), pp\,146]{Oda}). In fact, $\pi_{33}^{S}(S^{0})\tg(\z/2)^{5},$ (see \cite[pp.\,315]{brwd}). Obviously,  $\s^{\wq}(\nu_{6}\cg_{9}\nu_{16}\ct_{19})=0$, ($\cg\nu=0$, \cite[(7.20), pp.\,72]{Toda}).  Hence, $$2\cn'_{6}=\nu_{6}\cg_{9}\nu_{16}\ct_{19}.$$ We know $\ord(\nu_{6}\cg_{9}\nu_{16}\ct_{19})=2$. Then, $\ord(\cn_{6}')=4.$  Notice the proof of \cite[Formula (9.27), pp.\,145-146]{Oda}. For any element $\mathbbm{x}\in\pi_{39}^{6}$ satisfying $H(\mathbbm{x})=\ch_{11}\ct_{19}$, the original  $\cn'_{6}$ in \cite{Oda} can be taken as $\kappa_{6}'=\mathbbm{x}.$ Hence the result holds.

    \end{proof}
\;\\\indent Following \cite{Oda}, $\cn'_{6+i}:=\s^{i}\cn'_{6}$.
Recall the following facts on the determination of $\pi_{39}^{6}$ from \cite{Oda}. The \textit{EHP} sequence 
\[\xymatrix@C=1.5em{
  \pi_{40}^{11} \ar[r]^{P} &  \pi_{38}^{5} \ar[r]^{\s} & \pi_{39}^{6}\ar[r]^{H} & \pi_{39}^{11}\ar[r]^{P}  & \pi_{37}^{5}
}\]
induces a short exact 
sequence \vspace{-\baselineskip}\[\xymatrix@C=1.5em{
  0 \ar[r]^{} & \s \pi_{38}^{5} \ar[r]^{} & \pi_{39}^{6}\ar[r]^{H\quad} & \mathrm{Im}(H)\ar[r]^{}  & 0,
}\]
where  \vspace{-\baselineskip} \\ \centerline{$\s \pi_{38}^{5}=\mathrm{span}\n \nu_{6}\cg_{9}\nu_{16}\ct_{19},\; \mu_{4,6},\; \ca_{6}\mu_{3,7}\cg_{32}\nn\tg(\z/2)^{3}$}\\\\and \centerline{
$\mathrm{Im}(H)=\mathrm{span}\n C_{1}\w_{23},\;\s F_{1}^{(1)},\;\cg_{11}^{4},\;C_{1}^{(2)},\;\ch_{11}\ct_{19}\nn\tg(\z/2)^{5}.$}\\\\
Moreover, \qquad \qquad \qquad
$H(P(\s A_{1}\hc\w_{25}))= C_{1}\w_{23},\;
H(P(\s A_{1}\hc\cg_{25}\mu_{32}))=\s F_{1}^{(1)},$\;\\\\ \centerline{
$H(\ccs_{6}\cg_{25}^{2})=\cg_{11}^{4}\;\text{and}\; H(P(\s A_{1}^{(2)}))\ty C_{1}^{(2)}\m C_{1}\w_{23},\;\s F_{1}^{(1)},\;\cg_{11}^{4},\;\ch_{11}\ct_{19}.$}\\\\
\noindent The four liftings  
$
P(\s A_{1}\hc\w_{25}), P(\s A_{1}\hc\cg_{25}\mu_{32}) $, $\ccs_{6}\cg_{25}^{2}$ and $P(\s A_{1}^{(2)})$ are all of order 2.

By these facts,  Lemma \ref{wdka} together with \cite[Proposition 9.20, pp.\,144-145]{Oda}, we infer the following proposition, which solves the group extension problems  left by Oda (\cite{Oda}).  
\begin{pro}\label{zdl2} 
    
\begin{eqnarray}
\notag 
\pi_{39}^{6}&=& \mathrm{span}\n \cn'_{6},\; P(\s A_{1}\hc\w_{25}),\;P(\s A_{1}\hc\cg_{25}\mu_{32}),\; \ccs_{6}\cg_{25}^{2},\; P(\s A_{1}^{(2)}),\; \mu_{4,6},\; \ca_{6}\mu_{3,7}\cg_{32}\nn\\\notag
&\tg& \z/4\jia (\z/2)^{6},\quad 2\cn'_{6}=\nu_{6}\cg_{9}\nu_{16}\ct_{19};\\
\notag 
\pi_{40}^{7}&=& \mathrm{span}\n \cg'\ca_{14}\mu_{3,15},\; \cn'_{7},\;  \ccs_{7}\cg_{26}^{2},\; \mu_{4,7},\; \ca_{7}\mu_{3,8}\cg_{33}\nn \\\notag
&\tg& \z/4\jia (\z/2)^{4};\\
\notag 
\pi_{41}^{8}&=& \mathrm{span}\n  \cg_{8}\hc \s^{3}\tau^{\mathrm{IV}},\;\cn'_{8},\;\cg_{8}\ca_{15}\mu_{3,16},\;\cg_{8}\nu_{15}^{2}\ct_{21},\;
\s\cg'\hc\ca_{15}\mu_{3,16},\;  \ccs_{8}\cg_{27}^{2},\\\notag
&& \qquad \; \mu_{4,8},\; \ca_{8}\mu_{3,9}\cg_{34}\nn \\\notag
&\tg&  \z/8\jia\z/4\jia(\z/2)^{6}.\;\end{eqnarray} \;\\\;\vspace{-3.5\baselineskip}\\\indent\qquad\qquad\qquad$\bzd$
\end{pro}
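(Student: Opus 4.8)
The plan is to extract all three groups from short exact sequences furnished by the \textit{EHP} sequence, and to let the order computation already secured in Lemma \ref{wdka} decide the extension that Oda could not settle. The genuinely hard step, the order of $\cn'_6$, is done; what remains is to repackage the \textit{EHP} data as a direct-sum decomposition for $\pi_{39}^6$ and then to carry the conclusion to $\pi_{40}^7$ and $\pi_{41}^8$ by suspension.

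I would begin with $\pi_{39}^6$. The short exact sequence
$$0 \to \s\pi_{38}^5 \to \pi_{39}^6 \xrightarrow{\ H\ } \mathrm{Im}(H) \to 0$$
recorded above has kernel $\s\pi_{38}^5\tg(\z/2)^3$ and cokernel $\mathrm{Im}(H)\tg(\z/2)^5$, whence $|\pi_{39}^6|=2^8$; this is exactly Oda's dichotomy $(\z/2)^8$ versus $\z/4\jia(\z/2)^6$. I would split the seven proposed generators into two families. The five classes $\cn'_6$, $P(\s A_1\hc\w_{25})$, $P(\s A_1\hc\cg_{25}\mu_{32})$, $\ccs_6\cg_{25}^2$, $P(\s A_1^{(2)})$ have, by the Hopf-invariant formulas listed above, images $\ch_{11}\ct_{19}$, $C_1\w_{23}$, $\s F_1^{(1)}$, $\cg_{11}^4$, $C_1^{(2)}$ expressed in the stated basis of $\mathrm{Im}(H)$ by a matrix over $\z/2$ that is upper-triangular with unit diagonal (the only off-diagonal terms enter through $H(P(\s A_1^{(2)}))\ty C_1^{(2)}$ modulo the other four); it is therefore invertible, so these five classes map to a basis and are independent modulo $\ker H=\s\pi_{38}^5$. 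The remaining two classes $\mu_{4,6}$, $\ca_6\mu_{3,7}\cg_{32}$ lie in $\s\pi_{38}^5$ and, together with $\nu_6\cg_9\nu_{16}\ct_{19}$, span it.

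The extension is now forced by Lemma \ref{wdka}: since $\cn'_6$ has order $4$ with $2\cn'_6=\nu_6\cg_9\nu_{16}\ct_{19}$, the cyclic group $\langle\cn'_6\rangle\tg\z/4$ already contains the third generator of the kernel. As the four lifting classes and the two $\mu$-classes are all of order $2$, I would conclude that $\pi_{39}^6$ is the internal direct sum of $\langle\cn'_6\rangle$, the span of the four liftings, and the span of $\mu_{4,6},\ca_6\mu_{3,7}\cg_{32}$, giving
$$\pi_{39}^6\tg\z/4\jia(\z/2)^6,\qquad 2\cn'_6=\nu_6\cg_9\nu_{16}\ct_{19};$$
the count $4\cdot2^4\cdot2^2=2^8$ confirms there is no further collapse, so this is a genuine direct-summand $\z/4$, not merely a $\z/4$ subgroup.

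For $\pi_{40}^7$ and $\pi_{41}^8$ I would use the analogous short exact sequences of \cite[Proposition 9.20]{Oda}, which fix every summand except the order of $\cn'_{6+i}=\s^i\cn'_6$, and then show that suspension transports order $4$. Since $4\cn'_{6+i}=\s^i(4\cn'_6)=0$, it suffices to check $2\cn'_{6+i}=\s^i(\nu_6\cg_9\nu_{16}\ct_{19})\neq0$ in $\pi_{40}^7$ and $\pi_{41}^8$ respectively; granting this, $\cn'_7$ and $\cn'_8$ contribute the $\z/4$ summands while the remaining listed generators (all of order $2$, save the order-$8$ class $\cg_8\hc\s^3\tau^{\mathrm{IV}}$ when $n=8$) fill out $(\z/2)^4$ and $\z/8\jia(\z/2)^6$. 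I expect this nonvanishing to be the main obstacle: the witness $\nu_6\cg_9\nu_{16}\ct_{19}$ is stably trivial because $\cg\nu=0$ (\cite[(7.20)]{Toda}), so its persistence in $\pi_{40}^7$ and $\pi_{41}^8$ is a purely metastable phenomenon and must be certified through the \textit{EHP}/Hopf-invariant bookkeeping of \cite{Oda} rather than inferred from the stable stem.
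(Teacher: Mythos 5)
Your proposal is correct and takes essentially the same route as the paper: the paper also treats Lemma \ref{wdka} as the decisive input and then obtains all three groups by combining it with the listed \textit{EHP} facts and \cite[Proposition 9.20]{Oda}, which is exactly the ``bookkeeping'' you defer to for $\pi_{40}^{7}$ and $\pi_{41}^{8}$ (including the persistence of $2\cn'_{6+i}=\s^{i}(\nu_{6}\cg_{9}\nu_{16}\ct_{19})\neq0$ under suspension for $i=1,2$). Your explicit direct-sum counting for $\pi_{39}^{6}$ simply spells out what that citation supplies.
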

\;\\\indent
 (Without taking the localization), given an integer $k\geq8$, if the stable homotopy group $\pi_{k}^{S}(S^{0})$ is known, then the 8 groups $\pi_{n+k}(S^{n})$ $(n=k+3-i, 1\leq i\leq 8)$ are all known. Notice that the aforementioned 8 groups are the first 8 groups counted backward from the group  which  is first reached in the stable range. This useful result is shown in  \cite{dywx}  presented separately according to the remainders of $k\m8$. Here, we consider the case $k\ty1\m8$ and point out the following.
 
 \begin{lem}\label{c8d}(\cite[Satz\,1.2]{dywx}) Without taking the localization,
   suppose $k\geq1$ and the stable homotopy group $\pi_{8k+1}^{S}(S^{0})=G.$  Then, up to isomorphism, the 8 groups $\pi_{n+8k+1}(S^{n})$ $(n=8k+4-i, 1\leq i\leq 8)$ are given by  $$\pi_{16k+4}(S^{8k+3})=\pi_{16k-1}(S^{8k-2})=\pi_{16k-2}(S^{8k-3})=\pi_{16k-3}(S^{8k-4})= G,$$
$$\pi_{16k+2}(S^{8k+1})=\pi_{16k}(S^{8k-1})= G\jia\z/2,\;\;\pi_{16k+1}(S^{8k})= G\jia(\z/2)^{2}$$ and
$\pi_{16k+3}(S^{8k+2})= G\jia \z.$$\bzd$

 \end{lem}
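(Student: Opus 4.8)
The plan is to obtain all eight groups from the integral James \textit{EHP} exact sequence, descending one dimension at a time out of the stable range. Write $m=8k+1$ for the stem; the eight groups $\pi_{n+m}(S^{n})$ are precisely those for the consecutive spheres $n=8k-4,8k-3,\dots,8k+3$, and the top one $\pi_{16k+4}(S^{8k+3})$ is already stable (as $8k+3\geq m+2$), hence equals $G$ with no argument. For each smaller $n$ I would use the four-term segment
\[\pi_{n+m+2}(S^{2n+1})\xrightarrow{\;P\;}\pi_{n+m}(S^{n})\xrightarrow{\;\s\;}\pi_{n+m+1}(S^{n+1})\xrightarrow{\;H\;}\pi_{n+m+1}(S^{2n+1}),\]
whose two flanking groups are homotopy groups of the highly connected sphere $S^{2n+1}$ in stems $m-n$ and $m-n+1$. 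As $n$ runs from $8k+3$ down to $8k-4$ these stems run through $-2,-1,0,1,2,3,4,5,6$, all stable for $S^{2n+1}$ and classically known: $\pi^{S}_{0}(S^{0})=\z$, $\pi^{S}_{1}(S^{0})=\pi^{S}_{2}(S^{0})=\z/2$, $\pi^{S}_{3}(S^{0})=\z/24$, $\pi^{S}_{4}(S^{0})=\pi^{S}_{5}(S^{0})=0$ and $\pi^{S}_{6}(S^{0})=\z/2$. Thus every error term is known and the descent becomes a controlled bookkeeping of the images of $P$ and $H$.

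Carrying out the descent: at $n=8k+2$ the Hopf target vanishes and the incoming $P$ comes from $\pi^{S}_{0}(S^{0})=\z$ through the Whitehead square $P(\e_{16k+5})=[\e_{8k+2},\e_{8k+2}]$; since $8k+2$ is even and avoids the Hopf-invariant-one dimensions $2,4,8$, the identity $H[\e_{n},\e_{n}]=(1+(-1)^{n})\e_{2n-1}$ gives this class infinite order, so $P$ is injective, $\s$ is onto $G$, and $\pi_{16k+3}(S^{8k+2})\tg G\jia\z$. Feeding $G\jia\z$ forward, the Hopf invariant strips off its $\z$ (onto $2\z$) while $P$ from $\pi^{S}_{1}(S^{0})=\z/2$ contributes the nonzero class $[\e_{8k+1},\e_{8k+1}]\ca$, so $\pi_{16k+2}(S^{8k+1})\tg G\jia\z/2$. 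Iterating through $n=8k,\dots,8k-4$ one alternately gains and then cancels $\z/2$ summands: the incoming $P$ from $\pi^{S}_{2}(S^{0})=\z/2$ yields the second summand of $\pi_{16k+1}(S^{8k})\tg G\jia(\z/2)^{2}$; the stem-$2$ Hopf image together with $P$ vanishing on $\pi^{S}_{3}(S^{0})=\z/24$ collapses this to $\pi_{16k}(S^{8k-1})\tg G\jia\z/2$; and the order-$2$ Hopf image inside $\z/24$, the vanishing of $\pi^{S}_{4},\pi^{S}_{5}$, and the vanishing of $P$ on the stem-$6$ class $\nu^{2}$ return $\pi_{16k-1}(S^{8k-2})=\pi_{16k-2}(S^{8k-3})=\pi_{16k-3}(S^{8k-4})\tg G$.

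The genuine content, and the hardest part, is not this enumeration but the two families of facts that force each four-term sequence to be determinate. First, at every node one must decide exactly which low-stem class ($\ca$, $\ca^{2}$, $\nu$, $\nu^{2}$) is hit by $H$ and which is annihilated by $P$; equivalently, one must settle the vanishing or non-vanishing of the relevant Whitehead products $[\e_{n},\e_{n}]\ca$, $[\e_{n},\e_{n}]\ca^{2}$, $P(\nu)$, $P(\nu^{2})$. These are precisely the delicate desuspension computations that Thomeier's backward theorem packages once and for all, and proving them uniformly in $k$ (rather than sphere by sphere) is where the work concentrates. Second, each extension $0\to\mathrm{Im}(P)\to\pi_{n+m}(S^{n})\to\ker(H)\to0$ must be split: over $S^{8k+2}$ the $\z$ splits because the Hopf invariant maps it isomorphically onto $2\z$, and the $\z/2$ summands split by the standard $2$-primary splitting arguments for metastable sphere groups. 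Lastly, the James sequence is exact in the required range only once $n$ is large enough (about $k\geq2$), so the finitely many small-$k$ cases, if wanted, are read off directly from the classical tables of Toda and Oda. Assembling these pieces reproduces the eight isomorphisms exactly as stated.
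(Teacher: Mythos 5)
The first thing to note is that the paper does not prove this lemma at all: it is quoted from Thomeier (Satz 1.2 of the cited paper) and closed with a box, so there is no internal argument to compare against, and your write-up has to stand entirely on its own as a proof. To your credit, the framework is the right one and the bookkeeping is sound: the integral James exact sequence is available in the metastable range you use (your caveat that this requires roughly $k\geq 2$, with the $k=1$ groups read off Toda's tables, is correct); the flanking groups are the stable stems $-2,\dots,6$ of $S^{2n+1}$ with the values you list; and the images and kernels you assign to $P$ and $H$ at each stage are exactly the ones forced by, and consistent with, the claimed answer.

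The genuine gap is that you never prove the facts you yourself flag as ``the genuine content, and the hardest part.'' Concretely, your descent needs, uniformly in $k$: $[\iota_{8k+1},\eta_{8k+1}]\neq 0$; $[\iota_{8k},\eta_{8k}]\neq 0$ and $[\iota_{8k},\eta_{8k}^{2}]\neq 0$; $[\iota_{8k-1},\eta_{8k-1}^{2}]=0$ and $[\iota_{8k-1},\nu_{8k-1}]=0$; that $[\iota_{8k-2},\nu_{8k-2}]$ has order exactly $12$; and $[\iota_{8k-4},\nu_{8k-4}^{2}]=0$ --- together with, at each stage, an identification of $\ker H$ as a subgroup (not merely its order) and a splitting of the extension $0\to \mathrm{Im}(P)\to \pi_{n+m}(S^{n})\to \ker H\to 0$. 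None of these is established. Worse, you justify them by appeal to ``Thomeier's backward theorem,'' which is circular: that theorem is precisely the statement under proof. The splitting step in particular cannot be waved through as ``standard'': extensions of exactly this shape do fail to split, and the main theorem of the very paper at hand is such an instance --- there the extension $0\to \Sigma\pi_{38}^{5}\to\pi_{39}^{6}\to\mathrm{Im}(H)\to 0$ is non-split because $2\kappa'_{6}=\nu_{6}\sigma_{9}\nu_{16}\bar{\kappa}_{19}\neq 0$, which is the whole point of that theorem. So, as written, your text is a correct reduction of the lemma to a list of unproven Whitehead-product evaluations and splitting claims of essentially the same depth as the lemma itself; it is a proof plan, not a proof.
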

 
 \indent It is well-known that $\pi_{33}^{S}(S^{0})\tg(\z/2)^{5}$, (\cite[pp.\,315]{brwd}). Recall that $\pi_{33+n}(S^{n})$ ($2\leq n\leq 5$) are shown in \cite[p.\,144-145]{Oda} and  $\pi_{33+9}(S^{9})$ is determined in \cite{32STEM}.  By amalgamating the aforementioned discourse with Proposition \ref{zdl2} and Lemma \ref{c8d}, we arrive at the following corollary. (For $10\leq n\leq 27$,  Y. Hirato, T. Miyauchi, J. Mukai and J. Yang are in the midst of this project).
\begin{table}[h]
\centering\resizebox{14.7cm}{2cm}{
\begin{tabularx}{\textwidth}{|X|X|X|X|X|X|X|X|}
\hline
\textcolor{white}{n,114}$n$ & \textcolor{white}{2,1.11}\texttt{2}   &  \textcolor{white}{3,1110}\texttt{3}   &  \textcolor{white}{4,1441}\texttt{4}   & \textcolor{white}{5,1441}\texttt{5}   & \textcolor{white}{6,1141}\texttt{6}   \\
\hline
\textcolor{white}{0}$\pi_{33+n}(S^{n})$ & $\z/4\jia(\z/2)^{2}$ & \textcolor{white}{3,9.}$(\z/2)^{3}$ & {\small$(\z/8)^{2}\jia (\z/2)^{6}$}&\textcolor{white}{5,99}$(\z/2)^{4}$& \textcolor{white}{}$\z/4\jia (\z/2)^{6}$  \\
\hline
\textcolor{white}{n,114}$n$ & \textcolor{white}{2,11.1}\texttt{7}   &  \textcolor{white}{3,1110}\texttt{8}   &  \textcolor{white}{4,1411}\texttt{9}   & \textcolor{white}{5,.141}\texttt{}   & \textcolor{white}{6,1.41}\texttt{}   \\
\hline
\textcolor{white}{0}$\pi_{33+n}(S^{n})$& \textcolor{white}{}$\z/4\jia (\z/2)^{4}$ & {\small$\z/8\jia\z/4\jia(\z/2)^{6}$} & \textcolor{white}{9,9.}$(\z/2)^{6}$ &  & \\
\hline
\textcolor{white}{n,114}$n$ & \textcolor{white}{2,111}\texttt{28}   &  \textcolor{white}{3,110}\texttt{29}   &  \textcolor{white}{4,411}\texttt{30}   & \textcolor{white}{5,141}\texttt{31}   & \textcolor{white}{6,141}\texttt{32}   \\
\hline
\textcolor{white}{0}$\pi_{33+n}(S^{n})$ &  \textcolor{white}{28,.}$(\z/2)^{5}$ &\textcolor{white}{29,} $(\z/2)^{5}$ &\textcolor{white}{30,} $(\z/2)^{5}$ &\textcolor{white}{31,} $(\z/2)^{6}$ &\textcolor{white}{32,} $(\z/2)^{7}$\\
\hline

\textcolor{white}{n,114}$n$ & \textcolor{white}{2,111}\texttt{33}   &  \textcolor{white}{3,110}\texttt{34}   &  \textcolor{white}{4,411}\texttt{35}   &  \textcolor{white}{4,411}$\cdots$&\textcolor{white}{4,411}$\cdots$ \\
\hline
\textcolor{white}{0}$\pi_{33+n}(S^{n})$ & \textcolor{white}{33,}$(\z/2)^{6}$ & $(\z/2)^{5}\jia\z_{(2)}$& \textcolor{white}{35,} $(\z/2)^{5}$ & \textcolor{white}{4,411}$\cdots$ & \textcolor{white}{4,411}$\cdots$ \\
\hline
\end{tabularx}}
\caption{$\pi_{33+n}(S^{n})$\, ($2\leq n\leq 9$ or $n\geq27$) localized at 2}
\label{t111}

\end{table}
\begin{cor} Localized at 2, the 33-stem homotopy groups $\pi_{33+n}(S^{n})$ ($2\leq n\leq 9$, $n\geq27$) are given by  Table \ref{t111}.\qquad$\bzd$ 
\end{cor}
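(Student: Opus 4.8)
The plan is to establish the corollary by a region-by-region assembly, reading each entry of Table \ref{t111} off whichever source governs that range of $n$ and verifying that the stated ($2$-local) group structures agree. Four regimes arise: the small-sphere range $2\le n\le 5$, handled by Oda; the three previously-open cases $n=6,7,8$, handled by Proposition \ref{zdl2}; the single case $n=9$, handled by \cite{32STEM}; and the metastable-through-stable range $n\ge 28$, handled by the backward theorem Lemma \ref{c8d} together with Freudenthal stability. Because the deep input---the order $\ord(\cn'_{6})=4$ and the resulting three group extensions---is already packaged in Proposition \ref{zdl2}, the task here is purely one of bookkeeping: matching indices, substituting the known stable value, and tracking the localization.

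First I would transcribe the entries for $2\le n\le 5$ directly from \cite[pp.\,144--145]{Oda}, obtaining $\pi_{35}^{2}\tg\z/4\jia(\z/2)^{2}$, $\pi_{36}^{3}\tg(\z/2)^{3}$, $\pi_{37}^{4}\tg(\z/8)^{2}\jia(\z/2)^{6}$ and $\pi_{38}^{5}\tg(\z/2)^{4}$. The three central columns are then exactly the isomorphisms of Proposition \ref{zdl2}: $\pi_{39}^{6}\tg\z/4\jia(\z/2)^{6}$ with $\ord(\cn'_{6})=4$, $\pi_{40}^{7}\tg\z/4\jia(\z/2)^{4}$, and $\pi_{41}^{8}\tg\z/8\jia\z/4\jia(\z/2)^{6}$; these are the new results that complete the $33$-stem for the lower spheres. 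For $n=9$ I would quote the Miyauchi--Mukai determination \cite{32STEM}, namely $\pi_{42}^{9}\tg(\z/2)^{6}$.

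For the range $n\ge 28$ I would invoke Lemma \ref{c8d} in its $2$-local form. Since the stem satisfies $33=8\cdot 4+1$, the lemma applies with $k=4$ and $G=\pi_{8k+1}^{S}(S^{0})=\pi_{33}^{S}(S^{0})\tg(\z/2)^{5}$, the stable value being classical \cite[pp.\,315]{brwd}. Lemma \ref{c8d} then pins down the eight groups $\pi_{33+n}(S^{n})$ for $n=8k+4-i=36-i$, $1\le i\le 8$, i.e.\ for $28\le n\le 35$; substituting $G=(\z/2)^{5}$ gives $\pi_{33+n}(S^{n})\tg(\z/2)^{5}$ for $n\in\{28,29,30,35\}$, $(\z/2)^{6}$ for $n\in\{31,33\}$, $(\z/2)^{7}$ for $n=32$, and $(\z/2)^{5}\jia\z$ for $n=34$, where localizing at $2$ replaces the free summand by $\z_{(2)}$ to yield the tabulated $(\z/2)^{5}\jia\z_{(2)}$. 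For $n\ge 35$ the group has entered the stable range, so by Freudenthal it equals $G=(\z/2)^{5}$ and the pattern persists, giving the trailing columns of the table.

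The genuine difficulty lies not in this assembly but upstream, in Proposition \ref{zdl2}, which in turn hinges on the order computation $\ord(\cn'_{6})=4$ of Lemma \ref{wdka}; granting that, the only points that require care here are the index arithmetic of Lemma \ref{c8d}---reading $k=4$ from $33\equiv 1\pmod 8$ and the shift $n=36-i$---and the consistent $2$-local treatment of the free summand at $n=34$. I would close by noting that the backward theorem ($28\le n\le 35$) together with stability ($n\ge 35$) account for every large-$n$ entry displayed in Table \ref{t111}.
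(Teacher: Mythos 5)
Your proposal is correct and takes essentially the same route as the paper, whose proof of the corollary is exactly this amalgamation: Oda's values for $2\le n\le 5$, Proposition \ref{zdl2} for $n=6,7,8$, the Miyauchi--Mukai determination of $\pi_{42}^{9}\tg(\z/2)^{6}$, and Lemma \ref{c8d} with $k=4$, $G=\pi_{33}^{S}(S^{0})\tg(\z/2)^{5}$ (localized at $2$, so the free summand at $n=34$ becomes $\z_{(2)}$) together with stability for $n\ge 35$. The one point worth noting is an off-by-one in the paper itself: the corollary asserts $n\ge 27$, but Lemma \ref{c8d} and Table \ref{t111} --- like your argument --- only reach down to $n=28$, which is consistent with the paper's remark that the range $10\le n\le 27$ is still work in progress, so your restriction to $n\ge 28$ is in fact the defensible reading.
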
 
\subsection{A problem}

Suppose $p$ is a prime  and  $X$ (without taking the 2-localization) is simply-connected \textit{CW} complex of finite type. After localization at $p$, let $r_{n}(X)=\mathrm{dim}(\pi_{n}(X)\otimes\z/p)$, i.e., the dimension of $\pi_{n}(X)\otimes\z/p$ as a $\z/p$-vector space. It is clear that $r_{n}(X)$ is  an invariant dependent solely on $n$ and the homotopy type of $X$. And it  detects the count of non-trivial direct summands of $\pi_{n}(X)$.

This invariant will afford formidable aid in addressing  extension problems for  homotopy groups, also assists in resolving the group extension problems left by  homotopy spectral sequences  such as Adams spectral sequences.
Notice that Oda's first extension problem can state as in the case  $p=2$, $$r_{39}(S^{6})=7\;\text{or}\;8.$$ 
\textbf{Problem}  How can we analyze the invariant $r_{n}(X)$  from some perspectives, whether geometric or algebraic? Alternatively, how might we construct a new theoretical framework to examine this invariant, thereby assisting in our exploration of  extension problems for homotopy groups?

\end{document}